\newtheorem{theorem}{Theorem}
\def\Z{\mathbb{Z}}
\def\R{\mathbb{R}}
\def\H{\mathcal{H}}
\def\P{\mathcal{P}}
\def\k{\mathbf{k}}
\def\x{\mathbf{x}}
\def\0{\mathbf{0}}
\def\1{\mathbf{1}}
\begin{document}

\title{Nowhere-Zero $\vec k$-Flows on Graphs}

\author[M.~Beck]{Matthias Beck}
\address{Department of Mathematics, San Francisco State University, San Francisco, CA 94132, USA}
\email{mattbeck@sfsu.edu}

\author[A.~Cuyjet]{Alyssa Cuyjet}
\address{Department of Mathematics, Trinity College, Hartford, CT 06106, USA}
\email{alyssa.cuyjet@trincoll.edu}

\author[G.~R.~Kirby]{Gordon Rojas Kirby}
\address{Department of Mathematics, Stanford University, Palo Alto, CA 94305, USA}
\email{girkirby@gmail.com}

\author[M.~Stubblefield]{Molly Stubblefield}
\address{Mathematics Department, Western Oregon University, Monmouth, OR 97361, USA}
\email{mstubblefield08@mail.wou.edu}

\author[M.~Young]{Michael Young}
\address{Department of Mathematics, Iowa State University, Ames, IA 50011, USA}
\email{myoung@iastate.edu}

\begin{abstract}
We introduce and study a multivariate function that counts nowhere-zero flows on a graph $G$, in which each edge of $G$ has an
individual capacity. We prove that the associated counting function is a piecewise-defined polynomial in these capacities, which
satisfy a combinatorial reciprocity law that incorporates totally cyclic orientations of~$G$. 
\end{abstract}

\keywords{Graph flow, nowhere-zero flow, multivariate piecewise-defined polynomial, combinatorial reciprocity theorem, totally cyclic orientation}

\subjclass[2000]{Primary 05C21; Secondary 05A15, 05C31.}
% 05A15 Exact enumeration problems, generating functions
% 05C21 Flows in graphs
% 05C31 Graph polynomials

\date{10 May 2013}

\thanks{We thank Ricardo Cortez and the staff at MSRI for creating an ideal research environment at MSRI-UP, and
Felix Breuer for stimulating discussions about our work.
This research was partially supported by the NSF through the grants DMS-1162638 (Beck),
DMS-0946431 (Young), and DMS-1156499 (MSRI-UP REU), and by the NSA through grant H98230-11-1-0213.}

\maketitle

%------------------------------------------------%

% \section{Introduction}

Let $G=(V,E)$ be a graph, which we allow to have multiple edges.
Fix an orientation of $G$, i.e., for each edge $e = uv \in E$ we assign one of $u$ and $v$ to be the head $h(e)$ and the other to be the tail
$t(e)$ of $e$.
A \emph{flow} on (this orientation of) $G$ is a labeling $\x \in A^E$ of the edges of $G$ with values in an Abelian group $A$ such that for every node $v \in V$,
\[
  \sum_{ h(e) = v } x_e \ = \ \sum_{ t(e) = v } x_e \, , 
\]
i.e., we have conservation of flow at each node.
We are interested in counting such flows that are \emph{nowhere zero}, i.e., $x_e \ne 0$ for every edge $e \in E$.
% It is clear that any such count is independent on the chosen orientation of $G$.

\def\modflow{\overline \varphi}
\def\flow{\varphi}

The case $A = \Z_k$ goes back to Tutte \cite{tutteflowpoly}, who proved that the number $\modflow (k)$ of nowhere-zero $\Z_k$-flows is a polynomial in $k$.
(Tutte introduced this counting function as a dual concept to the chromatic polynomial.)
In the case $A = \Z$, a \emph{$k$-flow} takes on integer values in $\{ -k+1, \dots, k-1 \}$. The fact that the number $\flow (k)$ of nowhere-zero $k$-flows is also a polynomial in $k$ is due to Kochol \cite{kocholpolynomial}.
Both $\modflow (k)$ and $\flow (k)$ are easily seen to be independent of the chosen orientation of $G$, and so we will write these flow
polynomials as $\modflow_G (k)$ and $\modflow_G (k)$.
We give three examples of each in \eqref{smallexamples} below.
We also note the well-known fact that $G$ admits no nowhere-zero flow if $G$ has a \emph{bridge} (also known as an \emph{isthmus}), i.e., an edge whose removal increases the number of components of~$G$.

Since both flow counting functions are polynomials, it is natural to ask about evaluations other than at positive integers.
Mirroring a famous result of Stanley \cite{stanleyacyclic} on chromatic polynomials and acyclic orientations of a graph, this question was answered for negative
integers (instances of a \emph{combinatorial reciprocity theorem}) by Beck--Zaslavsky \cite{nnz} for $\flow_G(k)$ and by Breuer--Sanyal \cite{breuersanyal} for
$\modflow_G(k)$; in both cases the counting function yields a connection to totally cyclic orientations of the graph.

Our goal is to introduce and study a flow counting function depending on several variables. 
We define a \emph{$\k$-flow} on $G$, for $\k \in \Z_{ >0 }^E$, as an integral flow $\x \in \Z^E$ such that $|x_e| < k_e$ for each $e \in E$. 
In plain English, we have a different capacity for each edge. While this seems to be a natural concept for graph flows, we are not aware of any enumeration concept
associated with it.

As before, we count only the nowhere-zero $\k$-flows of $G$, and we denote this count (with a slight abuse of notation) by $\flow_G(\k)$.
Below are three examples for this multivariate flow function. 

\def\JPicScale{.8}
 \begin{figure}[!htb]
\begin{center}
\input{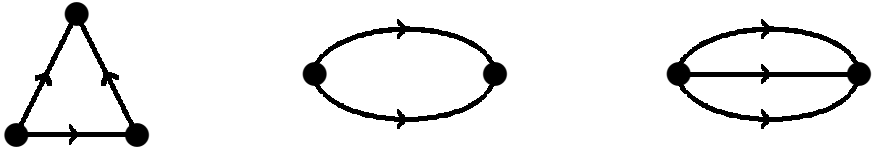}
\end{center}
\label{smallGraphs}
\vspace{-5 cm}
\caption{Three small graphs: $K_3$, $2K_2$, and $3K_2$.}
\end{figure}

\begin{align}
  \modflow_{ K_3 } (k) &= k-1 \nonumber \\
  \flow_{ K_3 } (k) &= 2(k-1) \nonumber \\
  \flow_{ K_3 } (k_1, k_2, k_3) &= 2(k_1-1) \ \text{ if } k_1 \le k_2 \le k_3 \nonumber \\
  \modflow_{ 2K_2 } (k) &= k-1 \nonumber \\
  \flow_{ 2K_2 } (k) &= 2(k-1) \nonumber \\
  \flow_{ 2K_2 } (k_1, k_2) &= 2(k_1-1) \ \text{ if } k_1 \le k_2 \label{smallexamples} \\
  \modflow_{ 3K_2 } (k) &= (k-1)(k-2) \nonumber \\
  \flow_{ 3K_2 } (k) &= 3(k-1)(k-2) \nonumber \\
  \flow_{ 3K_2 } (k_1, k_2, k_3) &= \begin{cases}
    (2k_1-2)(2k_2-3) & \text{ if } k_1 \leq k_2 \leq k_3 > k_2+k_1 \, , \\
    -k_1^2+2k_1k_2+2k_1k_3-5k_1 -k_2^2 \\
    \quad \ {} +2k_2k_3-3k_2-k_3^2-k_3+6 & \text{ if } k_1 \leq k_2 \leq k_3 \leq k_2+k_1 \, .
  \end{cases} \nonumber
\end{align}
Already in these small examples, we can see some similarities (e.g., the degrees of the flow polynomials associated with a given graph, or the constant terms of
$\flow_G(k)$ and $\flow_G(\k)$) and some differences among the three flow counting functions, most notably the fact that $\flow_G(\k)$ is only a \emph{piecewise-defined} polynomial in $\k$ (note that in \eqref{smallexamples} we state $\flow_G(\k)$ in each case only for $k_1 \le k_2 \le \cdots$; we may do so in these cases due to the symmetry of the graphs considered).
A moment's thought reveals that this is structurally the best we can hope for $\flow_G(\k)$, and in fact, our first result says this structure always holds.
For a graph $G = (V, E)$, let $\xi_G := |E| - |V| + \#\mathrm{components}(G)$, the \emph{cyclomatic number} of~$G$.

\begin{theorem}\label{polthm}
For a bridgeless graph $G$, the multivariate flow counting function $\flow_G(\k)$ is a piecewise-defined polynomial of degree~$\xi_G$.
\end{theorem}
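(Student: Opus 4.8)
The plan is to reduce the nowhere-zero count to a lattice-point count in a \emph{parametric} polytope and invoke Ehrhart theory, using the total unimodularity of the incidence matrix to upgrade quasi-polynomiality to genuine polynomiality. Fix the orientation and work with the \emph{flow lattice} $\mathcal F := \ker_{\Z}\partial = \{\x \in \Z^E : \partial\x = \0\}$, where $\partial$ is the signed incidence matrix; this is a free abelian group of rank $\xi_G$, and $\mathcal F_{\R} := \mathcal F \otimes \R \subseteq \R^E$ is the cycle space, of dimension $\xi_G$. A $\k$-flow with $|x_e| < k_e$ is exactly a point of $\mathcal F$ lying in the box $B(\k) := \prod_{e \in E}[-(k_e-1),\,k_e-1]$, and the nowhere-zero ones are those avoiding every coordinate hyperplane $x_e = 0$. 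I would remove the nowhere-zero condition by inclusion--exclusion over the vanishing set $Z = \{e : x_e = 0\}$ (this is the Möbius inversion underlying the inside-out viewpoint of Beck--Zaslavsky \cite{nnz}); since prescribing $x_e = 0$ for $e \in A$ is the same as counting flows on the deletion $G \setminus A$, we get
\[
  \flow_G(\k) \ = \ \sum_{A \subseteq E} (-1)^{|A|}\, N_{G \setminus A}\bigl(\k|_{E \setminus A}\bigr) \, ,
\]
where $N_H(\k)$ denotes the number of \emph{all} integral flows on $H$ inside $B(\k)$. It then suffices to prove that each $N_H(\k)$ is piecewise polynomial and to track degrees.

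Next I would analyze $N_H(\k)$ as the number of lattice points of $\mathcal F_{\R}(H)$ in the parametric polytope $P_H(\k) := \mathcal F_{\R}(H) \cap B(\k)$, whose facets $x_e = \pm(k_e-1)$ move linearly with the parameters $\k$. The decisive structural point is that every vertex of $P_H(\k)$ is a lattice point of $\mathcal F$ whenever $\k \in \Z^E$. Indeed, a vertex is cut out by $\xi_H$ independent tight constraints $x_e = \pm(k_e-1)$; the functionals $x_e|_{\mathcal F_{\R}(H)}$ for $e$ in the tight set $T$ are independent precisely when no nonzero flow is supported on $E(H)\setminus T$, i.e.\ when $E(H)\setminus T$ is a spanning forest and $T$ is a cobasis. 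Fixing integer values on a cobasis determines a \emph{unique integral} flow, since the fundamental cycles of a spanning forest form an integral basis of $\mathcal F$ (equivalently, the incidence matrix is totally unimodular). Hence $P_H(\k)$ is a lattice polytope for integral $\k$.

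I would then subdivide the parameter space $\R_{>0}^E$ into the chambers on which the combinatorial type of $P_H(\k)$ is constant, the walls being the linear relations among the $k_e$ at which vertices collide (such as $k_3 = k_1 + k_2$ in the $3K_2$ example of \eqref{smallexamples}). On each chamber, Ehrhart's theorem for lattice polytopes, in its parametric form, shows that $N_H(\k)$ agrees with a polynomial of degree $\dim P_H = \xi_H$. Assembling the degree count for bridgeless $G$: deleting any nonempty edge set strictly lowers the cyclomatic number, so every term with $A \neq \emptyset$ has degree $\xi_{G\setminus A} \le \xi_G - 1$, while the $A = \emptyset$ term $N_G(\k)$ has degree exactly $\xi_G$ (its polytope is full-dimensional in $\mathcal F_{\R}$, since $\0$ lies in the interior of $B(\k)$). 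As $N_G$ contributes the unique top-degree term there is no cancellation, and on the common refinement of the chamber complexes of the $N_{G\setminus A}$ the function $\flow_G(\k)$ is a single polynomial of degree $\xi_G$.

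I expect the main obstacle to be the genuinely multivariate nature of the chamber decomposition. Unlike the classical single-variable flow polynomial, where the parameter ray carries one Ehrhart polynomial, here one must verify that the combinatorial type of $P_H(\k)$ is constant on polyhedral chambers of $\R_{>0}^E$ and, crucially, that total unimodularity forces the per-chamber counts to be honest polynomials rather than quasi-polynomials; controlling how these chambers interact across the different deletions $G\setminus A$ is the delicate bookkeeping that makes the final assembly work.
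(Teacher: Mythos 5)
Your proposal is correct, but it follows a genuinely different route from the paper's. Where the paper decomposes the set of nowhere-zero $\k$-flows according to totally cyclic orientations --- writing $\flow_G(\k)=\sum_{\sigma\in\Omega}f_\sigma(\k)$ as in \eqref{flowassum}, with each piece an interior lattice-point count of a polytope indexed by an orientation, via the Greene--Zaslavsky correspondence \cite{greenezaslavsky} --- you eliminate the nowhere-zero condition by inclusion--exclusion over deleted edge sets, so your pieces are counts of \emph{all} flows on the minors $G\setminus A$. Both decompositions then rest on the same analytic black box: the paper casts each $f_\sigma$ as a vector partition function with totally unimodular matrix and cites \cite{dahmenmicchelli,sturmfelsvectorpartition}, while you prove by hand (spanning forests and fundamental cycle bases, which is exactly where total unimodularity enters) that each parametric polytope $P_H(\k)$ has lattice vertices, and then invoke ``parametric Ehrhart.'' One caveat there: classical Ehrhart theory (dilations of a single polytope) does not literally apply; the statement you need --- constant combinatorial type on polyhedral chambers plus lattice vertices implies honest per-chamber polynomiality --- is precisely the chamber theory of vector partition functions \cite{dahmenmicchelli,sturmfelsvectorpartition}, so your citation should point there, with your lattice-vertex lemma being what rules out quasi-polynomials. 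As for what each approach buys: your degree argument is more explicit than the paper's --- the $A=\emptyset$ term has degree exactly $\xi_G$ (positive leading term given by relative volume), and every term with $A\neq\emptyset$ has degree at most $\xi_G-1$ because deleting an edge from a bridgeless graph strictly lowers the cyclomatic number and further deletions never raise it, so no cancellation can occur --- whereas the paper simply reads the degree off the dimension of the polytopes. Conversely, the paper's orientation decomposition is not gratuitous: \eqref{flowassum} is reused verbatim in the proof of Theorem~\ref{recthm}, where indexing by totally cyclic orientations is what produces the reciprocity multiplicities; your deletion-based decomposition proves Theorem~\ref{polthm} cleanly but would not set up that second argument.
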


Our second % main 
result is a combinatorial reciprocity theorem for $\flow_G(\k)$, which gives a vector-valued generalization of the main result in \cite{nnz}.
To state it, recall that a \emph{totally cyclic orientation} of $G$ is an orientation for which every edge lies in a coherently-oriented cycle.
A $\k$-flow of $G$ gives naturally rise to orientations of $G$ by changing the initial orientation of $G$ as
follows:
\begin{itemize}
  \item switch the direction of any edge that has a negative flow label;
  \item leave the direction of any edge that has a positive flow label;
  \item switch or leave the direction of any edge that has a zero flow label.
\end{itemize}
Any such orientation is \emph{compatible} with the $\k$-flow.
Denote by $\1 \in \Z^E$ a vector all of whose entries are~$1$.

\begin{theorem}\label{recthm}
Let $G$ be a bridgeless graph. Then $(-1)^{ \xi_G } \flow_G(-\k)$ equals the number of $(\k+\1)$-flows of $G$, each counted with multiplicity equal to the number of compatible totally cyclic orientations of $G$. 
In particular, $(-1)^{ \xi_G } \flow_G(\0)$ equals the number of totally cyclic orientations of~$G$. 
\end{theorem}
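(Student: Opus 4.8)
\emph{Plan.} The strategy is to sort the nowhere-zero $\k$-flows by the unique totally cyclic orientation they induce, convert each class into an interior lattice-point count for a parametric polytope, and apply an Ehrhart--Macdonald-type reciprocity class by class. Write $\mathcal{F} = \ker_{\R}\partial \subseteq \R^E$ for the real flow space, where $\partial$ is the signed incidence matrix of the reference orientation, and $\Lambda = \mathcal{F} \cap \Z^E$ for the flow lattice, so that $\dim \mathcal{F} = \mathrm{rank}\,\Lambda = \xi_G$. Every nowhere-zero flow $\x$ induces an orientation $\sigma(\x)$ by reversing the reference orientation exactly on the edges with $x_e<0$; in $\sigma(\x)$ the flow is strictly positive on every edge, and a cycle-decomposition argument shows $\sigma(\x)$ is totally cyclic. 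Since $\x$ has no zero entries this orientation is unique, so the nowhere-zero $\k$-flows partition by $\sigma$, giving
\[
\flow_G(\k) = \sum_{\sigma} f_\sigma(\k), \qquad f_\sigma(\k) := \#\{\mathbf{y} \in \Lambda_\sigma : 1 \le y_e \le k_e-1 \text{ for all } e\},
\]
where $\Lambda_\sigma$ is the flow lattice read off in the orientation $\sigma$ and $f_\sigma$ counts the $\sigma$-flows that are strictly positive and $\k$-bounded. If $\sigma$ is not totally cyclic, some edge lies on no coherently oriented cycle, every nonnegative $\sigma$-flow vanishes there, and $f_\sigma \equiv 0$; thus only totally cyclic $\sigma$ contribute.

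Next I would read $f_\sigma$ as an interior count. For totally cyclic $\sigma$, consider the parametric polytope $R_\sigma(\k) = \{\mathbf{y} \in \mathcal{F}_\sigma : 0 \le y_e \le k_e \text{ for all } e\}$ of nonnegative, $\k$-bounded $\sigma$-flows. Because $\sigma$ is totally cyclic, each edge supports a coherent cycle, so $R_\sigma(\k)$ is full-dimensional in $\mathcal{F}_\sigma$, and for $\k$ in the interior of a chamber the integer points with $0<y_e<k_e$ for every $e$ are exactly the relative-interior lattice points: the constraints $y_e \ge 0$ are genuine facets, while any constraint $y_e \le k_e$ that has become redundant in that chamber (as for $3K_2$ when $k_3>k_1+k_2$) is automatically strict on all of $R_\sigma(\k)$, so imposing its strictness removes no points. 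Hence $f_\sigma(\k)$ is the open Ehrhart (quasi-)polynomial of $R_\sigma(\k)$, which by Theorem~\ref{polthm} is an honest polynomial of degree $\xi_G$ on each chamber.

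Then I would invoke reciprocity. Within the interior of a fixed chamber of $\k$-space, $R_\sigma(\k)$ has constant combinatorial type and depends affinely on $\k$, so Ehrhart--Macdonald reciprocity for the parametric polytope yields the polynomial identity $(-1)^{\xi_G} f_\sigma(-\k) = L_\sigma(\k)$, where $L_\sigma(\k) = \#\{\mathbf{y} \in \Lambda_\sigma : 0 \le y_e \le k_e \text{ for all } e\}$ is the closed count. A point counted by $L_\sigma(\k)$ is a nonnegative $\sigma$-flow with $y_e \le k_e$; translated back to the reference orientation it is a flow $\x$ with $|x_e|\le k_e$, i.e.\ a $(\k+\1)$-flow, for which $\sigma$ is a compatible totally cyclic orientation (edges with $y_e>0$ are oriented as their flow sign demands, while edges with $y_e=0$ carry zero flow and may be oriented either way), and every pair of a $(\k+\1)$-flow and a compatible totally cyclic orientation arises exactly once. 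Summing over totally cyclic $\sigma$,
\[
(-1)^{\xi_G}\flow_G(-\k) = \sum_{\sigma}(-1)^{\xi_G} f_\sigma(-\k) = \sum_{\sigma} L_\sigma(\k),
\]
which is precisely the number of $(\k+\1)$-flows counted with multiplicity equal to the number of compatible totally cyclic orientations; the identity extends from chamber interiors to all of $\k$-space because both sides are the piecewise polynomial of Theorem~\ref{polthm}. Specializing to $\k=\0$, the only $\1$-flow is $\x=\0$, compatible with every totally cyclic orientation, so $(-1)^{\xi_G}\flow_G(\0)$ counts all totally cyclic orientations of $G$.

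The main obstacle I anticipate is the reciprocity step itself: establishing Ehrhart--Macdonald reciprocity in the genuinely multivariate, parametric setting, where the facet structure of $R_\sigma(\k)$ — and hence which inequalities are active — varies with the chamber. Care is needed to match the open count $f_\sigma$ with the relative interior uniformly across chambers (the redundant-facet bookkeeping above), and to ensure that the functions involved are the polynomials furnished by Theorem~\ref{polthm} rather than mere quasi-polynomials. I would handle this either by appealing to a parametric (vector-partition-function) version of Ehrhart reciprocity or, self-containedly, by passing to the multivariate Hilbert series $H_\sigma(\mathbf{z})$ of the pointed flow cone of $\sigma$ and using Stanley's reciprocity $H_\sigma(1/\mathbf{z}) = (-1)^{\xi_G} H_{\sigma,\mathrm{int}}(\mathbf{z})$, reading off $f_\sigma$ and $L_\sigma$ as the corresponding $\k$-bounded coefficient extractions.
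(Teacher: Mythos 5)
Your proposal is correct and follows essentially the same route as the paper: decompose $\flow_G(\k)=\sum_\sigma f_\sigma(\k)$ over totally cyclic orientations, apply multivariate (parametric) Ehrhart reciprocity to each open polytope count $f_\sigma$ — the paper cites Beck's multidimensional Ehrhart reciprocity for exactly this step — and reinterpret the resulting closed counts as $(\k+\1)$-flows weighted by their compatible totally cyclic orientations via the Greene--Zaslavsky correspondence. Your extra bookkeeping about redundant facets and the Stanley-reciprocity fallback are refinements of, not departures from, the paper's argument.
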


\begin{proof}[Proof of Theorem \ref{polthm}]
Given a graph $G$ with a fixed orientation, let $A \in \Z^{ V \times E }$ be its \emph{(signed) incidence matrix} with entries
\[
  a_{ ve } = \begin{cases}
     1 & \text{ if } v = h(e) , \\
    -1 & \text{ if } v = t(e) , \\
     0 & \text{ otherwise. } 
  \end{cases} 
\]
Let $F$ be the kernel of $A$, viewed as a subspace of $\R^E$. Note that, by definition, a nowhere-zero integral flow of $G$ is an integer lattice point in
\[
  F \setminus \H
  \qquad \text{ where } \qquad
  \H := \left\{ x_e = 0 : \, e \in E \right\} , 
\]
the arrangement of coordinate hyperplanes in $\R^E$. The induced arrangement in $F$ has regions (i.e., maximal connected components of $F \setminus \H$) which are in
one-to-one correspondence with the totally cyclic orientations of $G$ \cite{greenezaslavsky}. 
Taking a leaf from the geometric setup of \cite{iop} (which was also used in \cite{nnz}), given $\k \in \Z_{ >0 }^E$, a $\k$-flow is precisely a point in
\[
  \left( [-k_1, k_1] \times \dots \times [-k_{ |E| } , k_{ |E| }] \right)^\circ \cap \Z^E \cap F \setminus \H \, . 
\]
(Here $\P^\circ$ denotes the interior of $\P$.)
This geometric object is a union of open polytopes $\P_\sigma^\circ$ (depending on $\k$) which are naturally indexed by the totally cyclic orientations of $G$.
More precisely, if we let
\[
  \P_\sigma(\k) := \left( [0, k_1] \times \dots \times [0, k_{ |E| }] \right) \cap F_\sigma \, ,
\]
where $F_\sigma$ denotes the kernel of the incidence matrix $A_\sigma$ of $G$ reoriented by $\sigma$,
\[
  f_\sigma(\k) := \# \left( \P_\sigma(\k)^\circ \cap \Z^E \right) ,
\]
and $\Omega$ the set of all the totally cyclic orientations of $G$, then
\begin{equation}\label{flowassum}
  \flow_G(\k) = \sum_{ \sigma \in \Omega } f_\sigma(\k) .
\end{equation}
Thus it suffices to prove that $\# \left( \P_\sigma(\k)^\circ \cap \Z^E \right)$ is a piecewise-defined polynomial of degree $\xi_G$.
The structure of this counting function is that of a \emph{vector partition function}:
\[
  \# \left( \P_\sigma(\k)^\circ \cap \Z^E \right) =
  \# \left\{ \x \in \Z^E : \, A_\sigma \, \x = 0 \, , \ 0 < x_e < k_e \right\} .
\]
It is well known (see, e.g., \cite{schrijvertheorylinintprog}) that $A_\sigma$ is \emph{totally unimodular}, i.e., every square submatrix of $A_\sigma$ has
determinant $\pm 1$ or 0. But from this we can conclude with the general theory of vector partition functions (see, e.g.,
\cite{dahmenmicchelli,sturmfelsvectorpartition}) that $\# \left( \P_\sigma(\k)^\circ \cap \Z^E \right)$ is a piecewise-defined polynomial. 
Its degree equals $\xi_G$ because that is the dimension of the underlying polytope~\cite{nnz}.
\end{proof}

\begin{proof}[Proof of Theorem \ref{recthm}]
We start with \eqref{flowassum} and use the main result of~\cite{evencloser}:
\[
  \flow_G(-\k) 
  = \sum_{ \sigma \in \Omega } f_\sigma(-\k)
  = \sum_{ \sigma \in \Omega } (-1)^{ \xi_G } \, \# \left( \P_\sigma(\k) \cap \Z^E \right) . 
\]
What we are counting on the right-hand side (ignoring $(-1)^{ \xi_G }$) are $(\k+\1)$-flows with multiplicities that come from zero entries, i.e., a
zero label of an edge.
This multiplicity is precisely the number of closed regions of the hyperplane arrangement induced by $\H$ on $F$ (using the notation from the beginning of our proof
of Theorem \ref{polthm}) a $(\k+\1)$-flow lies in (viewed as a point in $F$). 
The afore-mentioned interpretation of these regions in terms of totally cyclic orientations of $G$ \cite{greenezaslavsky} gives Theorem~\ref{recthm}. 
\end{proof}

We close with two open problems.
While the $\Z_k$-flow polynomial $\modflow_G (k)$ is always monic, the problem of finding a formula for, or a combinatorial interpretation of, the leading coefficient of
the integer flow polynomial $\flow_G(k)$ remains open \cite[Problem 3.2]{nnz}; by the underlying geometry, this leading coefficient equals the leading coefficient of~$\flow_G(\k)$.

The algebraic structure of the new flow counting function $\flow_G(\k)$ gives a natural new problem, namely,
to determine its regions polynomiality.
The analogous problem for general vector partition functions (assuming the underlying matrix is totally unimodular;
if not, there is a variant of this problem) is wide open.
As we have seen, $\flow_G(\k)$ is a sum of a special case of vector partition functions. The problem of determining
the regions of polynomiality for $\flow_G(\k)$ might be as intractable as that for general vector partition
functions, but one can hope that the special form of multivariate flow polynomials allows some insights about regions of polynomiality.

%------------------------------------------------%

\bibliographystyle{amsplain}
% \bibliography{bib} 

\def\cprime{$'$} \def\cprime{$'$}
\providecommand{\bysame}{\leavevmode\hbox to3em{\hrulefill}\thinspace}
\providecommand{\MR}{\relax\ifhmode\unskip\space\fi MR }
% \MRhref is called by the amsart/book/proc definition of \MR.
\providecommand{\MRhref}[2]{%
  \href{http://www.ams.org/mathscinet-getitem?mr=#1}{#2}
}
\providecommand{\href}[2]{#2}

\end{document}